\newtheorem{thm}{Theorem}[section]
\newtheorem{prop}{Proposition}[section]
\newtheorem{lem}{Lemma}[section]
\newtheorem{cor}{Corollary}[section]
\newenvironment{dfn}{\trivlist \item[\hskip \labelsep{\bf Definition.}]}%
{\endtrivlist}
\newenvironment{proof}{\trivlist \item[\hskip \labelsep{\bf
Proof.}]}{\endtrivlist}
\newenvironment{Proof}[1]{\trivlist \item[\hskip \labelsep{\bf #1.}]}%
{\endtrivlist}
 \def\.{.\spacefactor\@m}
\def\<{\langle}
\def\>{\rangle}
\def\:{\colon}
\def\ss{\subset}
\def\sp{\supset}
\def\c{\circ}
\def\R{{\Bbb R}}
\def\bN{{\bar N}}
\def\g{{\gamma}}
\def\G{\Gamma}
\def\dl{\delta}
\def\e{\varepsilon}
\def\l{\lambda}
\def\om{\omega}
\def\r{\rho}
\def\MST{{\operatorname{MST}}}
\def\SMT{{\operatorname{SMT}}}
\def\len{{\operatorname{len}}}
\def\diag{\operatorname{diag}}
\def\ch{\operatorname{cosh}}
\def\sh{\operatorname{sinh}}
\def\arcch{\operatorname{arccosh}}
\begin{document}

 \title{Steiner Ratio for Riemannian Manifolds}
 \author{D.~Cieslik, A.~O.~Ivanov, A.~A.~Tuzhilin}
 \date{}
 \maketitle
 \begin {abstract}
For a metric space $(X,\r)$ and any finite subset $N\ss X$ by
$\r(\SMT_N)$ and $\r(\MST_N)$ we denote respectively the lengths of a
Steiner minimal tree and a minimal spanning tree with the boundary $N$.
The {\em Steiner ratio $m(X,\r)$} of the metric space
is the value $\inf_{\{N:N\ss X\}}\frac{\r(\SMT_N)}{\r(\MST_N)}$.
In this paper we prove the following results describing the Steiner ratio
of some manifolds:

(1) the Steiner ratio of an arbitrary $n$-dimensional connected Riemannian
manifold $M$ does not exceed the Steiner ratio of $\R^n$;

(2) the Steiner ratio of the base of a locally isometric covering is
more or equal than the Steiner ratio of the total space;

(3) the Steiner ratio of a flat two-dimensional torus, a flat Klein
bottle, a projective plain having constant positive curvature is equal to
$\sqrt3/2$;

(4) the Steiner ratio of the curvature $-1$ Lobachevsky space does not
exceed $3/4$;

(5) the Steiner ratio of an arbitrary surface of constant negative
curvature $-1$ is strictly less than $\sqrt3/2$.

\smallskip
{\bf Keywords:} Steiner minimal tree (SMT), minimal spanning tree (MST),
the Steiner problem, the Steiner ratio, metric space, Riemannian manifold.
 \end {abstract}

 \section {Introduction and main results}
 \markright {Introduction and main results.}

{\let\thefootnote\relax
 \footnote{A.~Ivanov and A.~Tuzhilin were partially supported by
RFBR (grants 96--15--96142 and 98--01--00240) and INTAS (grant 97--0808).
 }
}

\setcounter{footnote}{0}

Let $V$ be an arbitrary finite set. Recall that a {\em graph $G$ on $V$}
is the pair $(V,E)$, where $E$ is a finite set that consists of some pairs
of elements from $V$. Notice that $E$ can contain several copies of some
pair, and also $E$ can contain the pairs of the form $\{v,v\}$, where
$v\in V$. Elements from $V$ are called {\em vertices of $G$}, and the
elements from $E$ are called {\em edges of $G$}. The edges of the form
$(v,v)$ are called {\em loops}, and if $E$ contains several copies of an
edge $e=\{v,v'\}\in E$, then the edge $e$ is called a {\em multiple edge}.
For a given graph $G$ we denote the set of all its vertices by $V(G)$, and
the set of all its edges by $E(G)$.  For convenience, we shall often
denote the edge $e=\{x,y\}\in E(G)$ by $xy$.

Sometimes it is  useful to consider graphs as topological spaces glued from
segments each of which corresponds to an edge of the graph. Such graphs are
called {\em topological graphs}. A continuous mapping $\G$ from a
topological graph $G$ into a topological space is called a {\em network};
the topological graph $G$, and also the standard graph corresponding to
$G$, are called the {\em type\/} of $\G$ or the {\em topology\/} of $\G$.
Thus, the edges of a network are continuous curves in the ambient space.
Moreover, all the terminology of the Topological Spaces Theory is
transferred to the topological graphs and networks. If the ambient space is
a smooth manifold, then a network in such space is called {\em smooth\/}
({\em piecewise-smooth\/}), if all its edges are smooth
(piecewise-smooth).

A graph $G$ is called {\em weighted} if it is given a non-negative
function $\om\:E(G)\to\R$ called the {\em weight function}. The number
$\om(e)$ is called the {\em weight of the edge $e\in E(G)$}. The sum of
the weights over all edges of $G$ is called the {\em weight of the graph
$G$} and it is denoted by $\om(G)$. If $G$ is a connected weighted graph,
then the set of all connected spanning subgraphs of $G$ having the least
weight contains a tree. Each such tree is called a {\em minimal spanning
tree\/} and is denoted by $\MST_G$.  Notice that if all the weights are
strictly greater than zero, then any connected spanning subgraph of $G$ of
the least weight is a tree.

Let $X$ be a set, $\r$ be a metric on $X$, and $N$ be an arbitrary finite
subset of $X$. Let $G$ be a complete graph on $N$. The metric $\r$
generates the weight function that assigns to each edge $xy\in E(G)$ the
number $\r(x,y)$. This weight function will be denoted by the same letter
$\r$. Minimal spanning tree in the graph $G$ is denoted by $\MST_N$. A
{\em minimal Steiner tree on the set $N$} or a {\em minimal Steiner tree
spanning the set $N$} is defined to be a tree $\G$, $N\ss V(\G)$, such
that
 \begin {equation}\label{eq:SMT}
\r(\G)=\inf_{\{\bN:\bN\subset N\}}\r(\MST_\bN),
 \end {equation}
where the least upper bound is taken over all finite subsets $\bN$ in $X$ that
contain $N$. A minimal Steiner tree on the set $N$ is denoted by $\SMT_N$.

Note that, generally speaking, an $\SMT_N$ exists not for any $N$ (one of
the reasons of that can be the incompleteness of the metric space
$(X,\r)$). Nevertheless, the greatest lower bound from the definition of
$\SMT_N$ does always exist.  {\bf In what follows, the greatest lower
bound from~(\ref{eq:SMT}) is always denoted by $\r(\SMT_N)$, irrespective
of the existence of $\SMT_N$.}

The novelty of Steiner's Problem is that new points, the
Steiner points, may be introduced so that an interconnecting network
of all these points will be shorter. Given a set of points, it is
a priori unclear how many Steiner points one has to add in order to
construct an $\SMT$. Whereas Steiner's Problem is very
hard as well in combinatorial as in computational sense, the determination
of a Minimum Spanning Tree is simple. Consequently, we are interested in

 \begin {dfn}
The {\em Steiner ratio $m(X,\r)$ of a metric space $(X,\r)$} is defined as
the following value:
 $$
m(X,\r)=\inf_{\{N:N\ss X\}}\frac{\r(\SMT_N)}{\r(\MST_N)}.
 $$
 \end {dfn}

It is clear that the Steiner ratio of any metric space is always a
nonnegative number with $m(X,\r)\le1$. The Steiner ratio is a parameter of
the considered space and describes the approximation ratio for Steiner's
Problem. The quantity $m(X,\r)\cdot\r(\MST_N)$ would be a convenient lower
bound for the length of an $\SMT$ for $N$ in $(X,\r)$; that means, roughly
speaking, $m(X,\r)$ says how much the total length of an $\MST$ can be
decreased by allowing Steiner points.

 \begin{prop}[E.F.Moore, in \cite{Gilbert:68}] For the Steiner ratio
of any metric space $(X,\r)$ the inequalities
 $$
\frac12\leq m(X,\r)\leq1
 $$
hold.
 \end{prop}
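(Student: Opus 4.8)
The plan is to establish the two inequalities separately. The upper bound $m(X,\r)\le 1$ is essentially immediate: for every finite $N\ss X$ the complete graph on $N$ is itself a tree candidate with $V(\G)\supseteq N$, namely $\G=\MST_N$, so the infimum in~(\ref{eq:SMT}) over supersets $\bN\supseteq N$ is at most $\r(\MST_N)$ (take $\bN=N$). Hence $\r(\SMT_N)\le\r(\MST_N)$ for every $N$, and therefore $\r(\SMT_N)/\r(\MST_N)\le 1$; taking the infimum over all $N$ gives $m(X,\r)\le1$. (One should remark that the ratio is well-defined — $\r(\MST_N)>0$ whenever $N$ has at least two distinct points, and the degenerate cases $|N|\le 1$ are excluded or give ratio $1$ by convention.)

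The content is the lower bound $m(X,\r)\ge\tfrac12$, and here the natural approach is to show that for \emph{every} finite $\bN\ss X$ containing $N$ one has $\r(\MST_{\bN})\ge\tfrac12\,\r(\MST_N)$; passing to the infimum over such $\bN$ then yields $\r(\SMT_N)\ge\tfrac12\,\r(\MST_N)$, and the infimum over $N$ finishes the proof. So fix $\bN\supseteq N$ and let $T$ be a minimal spanning tree on $\bN$, i.e.\ $\r(T)=\r(\MST_{\bN})$. The idea is to traverse $T$ and use it to build a spanning connected graph on $N$ of weight at most $2\r(T)$. Concretely, perform a depth-first (Euler-type) walk around the tree $T$: doubling every edge of $T$ produces a connected multigraph in which every vertex has even degree, hence it admits an Eulerian circuit $C$ visiting every vertex of $\bN$, and $\r(C)=2\r(T)$ when we sum edge weights with multiplicity. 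Now record the order in which the points of $N\ss\bN$ are first encountered along $C$, say $x_1,x_2,\dots,x_k$ (cyclically), and consider the closed polygon $x_1x_2\cdots x_kx_1$ in the complete graph on $N$. By the triangle inequality applied along the portions of $C$ between consecutive $x_i$'s, the length of this polygon is at most $\r(C)=2\r(T)$. Deleting one edge of the polygon leaves a spanning path — in particular a spanning connected subgraph — on $N$ of weight $\le 2\r(T)$, so $\r(\MST_N)\le 2\r(T)=2\r(\MST_{\bN})$, which is exactly the desired bound.

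The only genuine subtlety, and the step I would be most careful about, is the existence of the minimizer $T=\MST_{\bN}$: the graph on $\bN$ here is the complete finite graph, so a minimal spanning tree certainly exists (there are finitely many spanning trees), and this is fine — the nonexistence phenomenon flagged in the text concerns $\SMT_N$, not $\MST$ of a finite complete graph. A second point to handle cleanly is the case where some added Steiner points coincide with points of $N$ or with each other, and the case $|N|=1$; these are harmless but should be dispatched with a sentence. Everything else is the triangle inequality plus the elementary fact that doubling a tree's edges yields an Eulerian multigraph, so no further machinery is needed and the bound $\tfrac12$ is obtained uniformly in $N$.
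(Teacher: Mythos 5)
Your proof is correct. The paper itself gives no proof of this Proposition --- it is quoted from Gilbert and Pollak with attribution to E.~F.~Moore --- and your argument (the upper bound by taking $\bN=N$ in~(\ref{eq:SMT}), and the lower bound by doubling the edges of $\MST_{\bN}$, extracting an Euler circuit, and shortcutting to a closed polygon on $N$ via the triangle inequality, so that $\r(\MST_N)\le2\,\r(\MST_{\bN})$ for every finite $\bN\sp N$) is precisely the standard proof from that source, with the degenerate cases $|N|\le1$ correctly set aside.
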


It is also shown that these inequalities are the best possible ones over
the class of metric spaces.%
 \footnote{And, indeed, there are metric spaces with Steiner ratios equals
$1$ and equals $0.5$.
 }

As an introductory example consider three points which form the nodes of
an equilateral triangle of unit side length in the Euclidean plane. An
$\MST$ for these points has length $2$. An $\SMT$ uses one Steiner point,
which is uniquely determined by the condition that the three angles at
this point are equal, and consequently equal $120^\c$. Consequently, we
find the length of the $\SMT$ in $3\cdot\sqrt{1/3}=\sqrt{3}$. So we have
an upper bound for the Steiner ratio of the Euclidean plane:
 \begin{equation}\label{gp}
m\leq\frac{\sqrt3}2 = 0.86602\ldots .
 \end{equation}
A long-standing conjecture, given by Gilbert and Pollak
\cite{Gilbert:68} in 1968, said that in the above inequality equality
holds.  This was the most important conjecture in the area of Steiner's
Problem in the following years.  Finally, in 1990, Du and Hwang
\cite{DuHwang} created many new methods and succeeded in proving the
Gilbert-Pollak conjecture completely: The Steiner Ratio of the Euclidean
plane equals $\sqrt3/2=0.86602\ldots$.%
 \footnote{This mathematical fact went in The New York Times, October 30,
1990 under the title "Solution to Old Puzzle: How Short a Shortcut?"
 }

For each dimension $n>2$, at present, exact values for the Steiner ratios
of the Euclidean spaces are not yet known. In particular, this is true for
$n=3$.

$\SMT$'s have been the subject of extensive investigations during the past
30 years or so. Most of this research has dealt with the Euclidean metric,
with much of the remaining work concerned with the ${\cal L}_1$-metric, or
more generally, the usual ${\cal L}_p$-metric or with two-dimensional
Banach spaces. An overview for the Steiner ratios of these metric spaces
is given in \cite{Cieslik1}.

The first results concerning the Steiner ratios of Riemannian manifolds
different from Euclidean spaces were obtained by J.~H.~Rubinstein and
J.~F.~Weng in 1997, see~\cite{Rubin}. They have shown that the Steiner
ratio for the standard two-dimensional spheres is the same as for the
Euclidean plane, that is, $\sqrt3/2$.

Now we list the main results of the present article. These results were
obtained by means of the technique worked out in~\cite{Cieslik1},
\cite{IT6}, and~\cite{ITbookWP}. Let us mention that in \cite{IT6}
and~\cite{ITbookWP} the authors investigate so called local minimal
networks which turn out to be useful in the subject.

 \begin {thm}\label{th:riem}
The Steiner ratio of an arbitrary $n$-dimensional connected Riemannian
manifold $M$ does not exceed the Steiner ratio of $\R^n$.
 \end {thm}

 \begin {thm}\label{th:bundle}
Let $\pi\:W\to M$ be a locally isometric covering of connected Riemannian
manifolds. Then the Steiner ratio of the base $M$ of the covering is more
or equal than the Steiner ratio of the total space $W$.
 \end {thm}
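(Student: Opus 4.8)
The plan is to prove the equivalent inequality $m(W,\r_W)\le m(M,\r_M)$ (writing $\r_M,\r_W$ for the Riemannian distances on $M,W$) by showing that an arbitrary finite configuration in the base lifts to one in the total space with no larger Steiner ratio, up to an arbitrarily small error. Fix a finite $N\ss M$ with at least two points and an $\e>0$. Using the definition of $\r_M(\SMT_N)$ as an infimum, I would first pick a finite $\bN\sp N$ in $M$ with $\r_M(\MST_{\bN})<\r_M(\SMT_N)+\e$, and realize the abstract minimal spanning tree $\MST_{\bN}$ as an honest network $\G\:G\to M$ whose underlying topological graph $G$ is a tree with vertices identified with the points of $\bN$, each edge being a curve joining its endpoints whose length is so close to the corresponding distance that the total length $\r_M(\G):=\sum_{e\in E(G)}\len(\G|_e)$ is still $<\r_M(\SMT_N)+\e$. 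Such connecting curves exist because the Riemannian distance is a length metric, so no completeness of $M$ is needed.

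The main step is the lifting. Since $G$ is a tree it is simply connected, so the continuous map $\G$ lifts through the covering to a network $\widetilde{\G}\:G\to W$ with $\pi\c\widetilde{\G}=\G$; and as $\pi$ is a \emph{locally isometric} covering it preserves the lengths of curves, so $\r_W(\widetilde{\G})=\r_M(\G)<\r_M(\SMT_N)+\e$. Because $\pi\c\widetilde{\G}=\G$ is injective on $V(G)$, so is $\widetilde{\G}$; put $\widetilde{\bN}=\widetilde{\G}(V(G))$ and let $\widetilde N\ss\widetilde{\bN}$ be the lifts of the vertices of $G$ lying over $N$, so $\pi$ restricts to a bijection $\widetilde N\to N$. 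Viewing $G$, via $\widetilde{\G}$, as a spanning tree of the complete graph on $\widetilde{\bN}$, and estimating $\r_W(\widetilde{\G}(x),\widetilde{\G}(y))\le\len(\widetilde{\G}|_{xy})$ on each edge, I get $\r_W(\MST_{\widetilde{\bN}})\le\r_W(\widetilde{\G})$, hence, since $\widetilde{\bN}\sp\widetilde N$,
$$
\r_W(\SMT_{\widetilde N})\le\r_W(\MST_{\widetilde{\bN}})\le\r_W(\widetilde{\G})<\r_M(\SMT_N)+\e .
$$

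For the denominator I would use the opposite monotonicity of $\pi$: being a local isometry, $\pi$ cannot increase the length of a projected curve, so $\r_M(\pi a,\pi b)\le\r_W(a,b)$ for all $a,b\in W$; projecting $\MST_{\widetilde N}$ thus yields a spanning tree on $N$ of weight at most $\r_W(\MST_{\widetilde N})$, so $\r_M(\MST_N)\le\r_W(\MST_{\widetilde N})$. Combining,
$$
m(W,\r_W)\le\frac{\r_W(\SMT_{\widetilde N})}{\r_W(\MST_{\widetilde N})}<\frac{\r_M(\SMT_N)+\e}{\r_M(\MST_N)} ,
$$
and letting $\e\to0$ and then taking the infimum over all finite $N\ss M$ gives $m(W,\r_W)\le m(M,\r_M)$. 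The step I expect to need the most care is this lifting: one must really present a near-optimal Steiner configuration of $N$ as a network over a \emph{tree} before lifting, since it is exactly the absence of cycles in $G$, together with the local-isometry property, that makes the lift exist with the same length; a naive lift of the point set $N$ by itself would be useless, as the Steiner minimal tree of such a lift in $W$ can be far longer than that of $N$ in $M$.
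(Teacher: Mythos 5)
Your proof is correct and follows essentially the same route as the paper: realize a near-optimal Steiner configuration for $N$ as a network over a tree, lift it through the covering using contractibility of the tree and the length-preserving property of the local isometry, and control the denominator via the fact that $\pi$ does not increase distances. The paper merely packages the final bookkeeping into a separate proposition (its Proposition 2.3), which you carry out inline.
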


 \begin {cor}\label{cor:2-dim}
The Steiner ratio for a flat two-dimensional torus, a flat Klein bottle, a
projective plain having constant positive curvature is equal to
$\sqrt3/2$.
 \end {cor}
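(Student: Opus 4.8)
The plan is to combine Theorems~\ref{th:riem} and~\ref{th:bundle} with the known value $m(\R^2)=\sqrt3/2$ of the Steiner ratio of the Euclidean plane. Each of the three surfaces in question—the flat torus, the flat Klein bottle, and the constant positive curvature projective plane—is the base of a locally isometric covering whose total space has Steiner ratio exactly $\sqrt3/2$: the flat torus and the flat Klein bottle are both covered by the Euclidean plane $\R^2$ (the torus by the quotient lattice action, the Klein bottle by a group generated by a translation and a glide reflection), while the projective plane of constant curvature $+1$ is double-covered by the unit sphere $S^2$, for which Rubinstein and Weng proved $m(S^2)=\sqrt3/2$.

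First I would fix one of these surfaces $M$ and its covering $\pi\colon W\to M$, where $W=\R^2$ or $W=S^2$. By Theorem~\ref{th:bundle}, since $\pi$ is a locally isometric covering of connected Riemannian manifolds, we get $m(M)\ge m(W)=\sqrt3/2$. For the reverse inequality I would invoke Theorem~\ref{th:riem}: each of these surfaces is a $2$-dimensional connected Riemannian manifold, so $m(M)\le m(\R^2)=\sqrt3/2$. Combining the two inequalities yields $m(M)=\sqrt3/2$ in every case.

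The only genuine content beyond citing the two theorems is verifying that the three listed surfaces really do admit locally isometric coverings by $\R^2$ or $S^2$, and that the relevant covering spaces have Steiner ratio $\sqrt3/2$. For $\R^2$ this last fact is the Du–Hwang theorem quoted in~(\ref{gp}) and the surrounding discussion; for $S^2$ it is the Rubinstein–Weng result cited above. The covering statements are classical facts from the theory of space forms: a flat torus is $\R^2/\Lambda$ for a rank-$2$ lattice $\Lambda$ acting by translations (isometries), a flat Klein bottle is $\R^2/\Gamma$ for a suitable group $\Gamma$ of Euclidean isometries acting freely and properly discontinuously, and the round projective plane $\R P^2$ is $S^2/\{\pm1\}$ with the antipodal map an isometry. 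In each case the quotient map is a local isometry and a covering.

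I do not expect any real obstacle here: the corollary is essentially an immediate consequence of the two preceding theorems once the standard covering descriptions are recalled. If anything requires a word of care, it is simply making sure that in Theorem~\ref{th:bundle} the hypotheses are met—connectedness of both spaces and the covering being globally (not just locally) a covering map—which holds in all three cases; and noting that the round $\R P^2$ in the statement is specifically the one of constant curvature $+1$, so that its double cover is exactly the unit sphere to which the Rubinstein–Weng theorem applies.
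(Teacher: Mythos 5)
Your argument is correct and is exactly the paper's proof, merely spelled out: the authors likewise cite Theorem~\ref{th:riem} for the upper bound $m(M)\le m(\R^2)=\sqrt3/2$ (via Du--Hwang) and Theorem~\ref{th:bundle} applied to the standard coverings by $\R^2$ and $S^2$ (via Rubinstein--Weng for the sphere) for the lower bound. Nothing in your proposal deviates from or adds a gap to that route.
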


Thus, taking into account the results of J.~H.~Rubinstein and
J.~F.~Weng~\cite{Rubin}, the Steiner ratio is computed now for all closed
surfaces having non-negative curvature.

 \begin {thm}\label{th:hyper}
The Steiner ratio of the curvature $-1$ Lobachevsky space does not
exceed $3/4$.
 \end {thm}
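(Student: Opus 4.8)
The plan is to produce a family of three‑point configurations whose Steiner ratio tends to $3/4$, namely the vertex sets of equilateral triangles of unboundedly growing size. Fix $r>0$, choose a totally geodesic copy of the Lobachevsky plane inside the $n$‑dimensional Lobachevsky space $H^n$, and let $N=\{p_1,p_2,p_3\}$ be the vertices of an equilateral triangle in that plane with circumcenter $o$ and circumradius $r$; let $s=s(r)$ be the common length of its sides. Since a totally geodesic submanifold carries the restriction of the ambient metric, all the distances involved may be computed inside the plane, and whatever we prove there transfers verbatim to $H^n$ for every $n\ge2$.

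First I would compute $s$. The three geodesics $op_i$ emanate from $o$ with consecutive angles $2\pi/3$, so by the hyperbolic law of cosines applied to the triangle $op_ip_j$,
$$
\ch s=\ch^2 r-\sh^2 r\cos\frac{2\pi}{3}=\ch^2 r+\frac12\sh^2 r=\frac14+\frac34\ch 2r .
$$
In particular $\ch s<\ch 2r$, so $s<2r$; and $\ch s\sim\frac38 e^{2r}$ as $r\to\infty$, so $s/r\to2$. The complete graph on $N$ has three edges, each of length $s$, whence $\r(\MST_N)=2s$. On the other hand, the star joining $o$ to $p_1,p_2,p_3$ has length $3r$, and taking $\bN=N\cup\{o\}$ in~(\ref{eq:SMT}) together with $r<s$ gives $\r(\MST_{\bN})=3r$; therefore $\r(\SMT_N)\le 3r$.

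Combining these bounds, for every $r>0$ we obtain
$$
m(H^n)\le\frac{\r(\SMT_N)}{\r(\MST_N)}\le\frac{3r}{2s(r)} ,
$$
and the right‑hand side is strictly larger than $3/4$ (because $s<2r$) yet tends to $3/4$ as $r\to\infty$ (because $s/r\to2$); hence its infimum over $r$ equals $3/4$, and $m(H^n)\le 3/4$. I expect the whole argument to be elementary: the one point meriting care is the verification that the center star is an admissible competitor for $\r(\SMT_N)$, which is immediate since $\r(\SMT_N)$ is defined as an infimum over all finite supersets $\bN\sp N$ and $\bN=N\cup\{o\}$ realizes $\r(\MST_{\bN})=3r$; beyond that, only the closed form for $\ch s$ and its asymptotics enter.
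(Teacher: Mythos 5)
Your proposal is correct and follows essentially the same route as the paper: an equilateral triangle inscribed in a circle of radius $r$ in the hyperbolic plane, with $\r(\MST_N)=2s$ where $\ch s=1+\frac32\sh^2r$, the center star giving $\r(\SMT_N)\le3r$, and the ratio $\frac{3r}{2s(r)}\to\frac34$ as $r\to\infty$. Your version is if anything slightly more careful, since you only claim the star as an upper bound for $\r(\SMT_N)$ (which is all that is needed) and you make explicit the reduction from $H^n$ to a totally geodesic plane.
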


 \begin {thm}\label{th:hypersurf}
The Steiner ratio of an arbitrary surface of constant negative curvature
$-1$ is strictly less than $\sqrt3/2$.
 \end {thm}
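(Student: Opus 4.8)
The plan is to produce, on an arbitrary surface $S$ of constant curvature $-1$, a three-point configuration $N$ with $\r(\SMT_N)/\r(\MST_N)<\sqrt3/2$; since the Steiner ratio is an infimum over all finite $N\ss S$, this suffices. Because $S$ has constant curvature $-1$, each of its points has a neighbourhood isometric to a domain in the Lobachevsky plane, so I fix a point $p\in S$ and a radius $r>0$, smaller than both the injectivity radius and the convexity radius at $p$, so that the ball $B(p,r)\ss S$ is geodesically convex and isometric to the $r$-ball of the Lobachevsky plane. All constructions below live inside $B(p,r)$, where distances in $S$ coincide with hyperbolic distances and minimizing geodesics stay inside; this severs the problem from the global topology of $S$.

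For $N$ I would take the three vertices of a hyperbolic equilateral triangle of small side length $s$ centred at $p$, i.e. three points on the circle of some small radius $R$ about $p$ at angular positions $0,2\pi/3,4\pi/3$, with $s$ small enough that the triangle and its Fermat tree lie in $B(p,r)$. Applying the hyperbolic law of cosines to the isosceles triangle $p\,x_i\,x_j$, whose angle at $p$ equals $2\pi/3$, gives $\ch s=\ch^2R+\tfrac12\sh^2R=\tfrac32\ch^2R-\tfrac12$; expanding both sides in powers of $R$ one finds $s=\sqrt3\,R\bigl(1+R^2/24+O(R^4)\bigr)$, so $s>\sqrt3\,R$ for every $R>0$. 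Since every vertex angle of a hyperbolic equilateral triangle is less than $2\pi/3$, the unique $\SMT$ of these three points is the tree joining them to the point where the three edges meet pairwise at $2\pi/3$, and by the threefold symmetry that Fermat point is $p$ itself, so $\r(\SMT_N)\le 3R$; and since $B(p,r)$ is convex and hence $d_S(x_i,x_j)=s$, we have $\r(\MST_N)=2s$. Therefore
$$\frac{\r(\SMT_N)}{\r(\MST_N)}\le\frac{3R}{2s}<\frac{3R}{2\sqrt3\,R}=\frac{\sqrt3}{2},$$
which is the assertion.

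The steps needing care are all elementary: that a sufficiently small normal ball on a curvature $-1$ surface is geodesically convex and isometric to a hyperbolic ball (so that neither $\r(\MST_N)$ nor the competing Steiner tree is affected by how $S$ closes up globally), the bookkeeping in the cosine-rule expansion that yields $s>\sqrt3\,R$, and the standard description of the $\SMT$ on three terminals with all angles below $2\pi/3$. The only genuine obstacle is conceptual: one must see why Theorem~\ref{th:hyper} cannot simply be quoted here, since its witnessing configuration is of large hyperbolic size and need not embed isometrically into a surface of small injectivity radius — which is exactly why on an arbitrary surface of constant curvature $-1$ one can only guarantee the ratio $<\sqrt3/2$, not $\le 3/4$. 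Everything else is a short computation in hyperbolic trigonometry.
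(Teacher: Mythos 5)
Your proposal is correct and follows essentially the same route as the paper: both take a small regular triangle inscribed in a circle of radius $R$ about a point, bound $\r(\SMT_N)\le 3R$ by the symmetric Fermat tree, compute the side length from the hyperbolic cosine rule, and extract the strict inequality from the second-order term of the Taylor expansion (your $s=\sqrt3\,R\bigl(1+R^2/24+O(R^4)\bigr)$ is exactly equivalent to the paper's expansion $m(r)=\tfrac{\sqrt3}2-\tfrac{r^2}{16\sqrt3}+O(r^4)$). Your added care about the injectivity/convexity radius, and the remark on why Theorem~\ref{th:hyper} cannot be quoted directly, are correct points that the paper leaves implicit in the phrase ``sufficiently small regular triangles.''
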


The authors want to thank the Ernst--Moritz--Arndt University of
Greifswald for the opportunity to work together in Greifswald in March
2000. A.~Ivanov and A.~Tuzhilin are grateful to academic A.~T.~Fomenko for
his kind interest to our work.

 \section {Proofs of the theorems}
 \markright {Proofs of the theorems.}
In the present section we give the proofs of the theorems stated above.

We need the following two Lemmas proved in~\cite{Cieslik1} (notice
that Lemma~\ref{lem:equiv_metr} is proved in~\cite{Cieslik1} for the
case of normalized spaces only, but the proof in the general case of
metric spaces is just the same.)

 \begin {lem}\label{lem:equiv_metr}
Let $X$ be a set, and $\r_1$ and $\r_2$ be two metrics on $X$. We assume
that for some numbers $c_2\ge c_1>0$ and for arbitrary points $x$ and $y$
from $X$ the following inequality holds:  $c_1\r_2(x,y)\le\r_1(x,y)\le
c_2\r_2(x,y)$. Then
 $$
\frac{c_1}{c_2}m(X,\r_2)\le m(X,\r_1)\le\frac{c_2}{c_1}m(X,\r_2).
 $$
 \end {lem}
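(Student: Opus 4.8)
The plan is to exploit the fact that the Steiner ratio is defined as an infimum of ratios $\r(\SMT_N)/\r(\MST_N)$ over finite $N$, and that both numerator and denominator scale comparably when we pass from $\r_2$ to the comparable metric $\r_1$. By symmetry of the two inequalities to be proved (just swap the roles of $\r_1$ and $\r_2$, which replaces $c_1,c_2$ by $1/c_2,1/c_1$), it suffices to prove one of them, say $m(X,\r_1)\le\frac{c_2}{c_1}m(X,\r_2)$.

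First I would fix an arbitrary finite subset $N\ss X$ and compare the four relevant quantities $\r_1(\MST_N)$, $\r_2(\MST_N)$, $\r_1(\SMT_N)$, $\r_2(\SMT_N)$. For the spanning tree: since $\MST$ is defined combinatorially as the minimum-weight spanning tree of the complete graph on $N$, and the weight of every edge changes by a factor in $[c_1,c_2]$ when we switch metrics, the total weight of \emph{any} spanning tree changes by a factor in $[c_1,c_2]$; hence $c_1\,\r_2(\MST_N)\le\r_1(\MST_N)\le c_2\,\r_2(\MST_N)$ (the minimizing tree for one metric is a competitor for the other). For the Steiner quantity one uses the formula $\r(\SMT_N)=\inf_{\bN\sp N}\r(\MST_{\bN})$ from~(\ref{eq:SMT}): for each finite $\bN$ containing $N$ the same edge-by-edge estimate gives $c_1\,\r_2(\MST_{\bN})\le\r_1(\MST_{\bN})\le c_2\,\r_2(\MST_{\bN})$, and taking the infimum over all such $\bN$ preserves these inequalities (an infimum of functions pointwise bounded above by $c_2 g$ is bounded above by $c_2\inf g$, and similarly from below). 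Thus $c_1\,\r_2(\SMT_N)\le\r_1(\SMT_N)\le c_2\,\r_2(\SMT_N)$ as well.

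Combining, for every finite $N$ we get
 $$
\frac{\r_1(\SMT_N)}{\r_1(\MST_N)}\le\frac{c_2\,\r_2(\SMT_N)}{c_1\,\r_2(\MST_N)}=\frac{c_2}{c_1}\cdot\frac{\r_2(\SMT_N)}{\r_2(\MST_N)}.
 $$
Taking the infimum over all finite $N\ss X$ on both sides yields $m(X,\r_1)\le\frac{c_2}{c_1}m(X,\r_2)$, which is the right-hand inequality. Applying the same argument with $\r_1$ and $\r_2$ interchanged (so the constants become $1/c_2\le 1/c_1$) gives $m(X,\r_2)\le\frac{c_2}{c_1}m(X,\r_1)$, i.e. the left-hand inequality $\frac{c_1}{c_2}m(X,\r_2)\le m(X,\r_1)$.

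The only mildly delicate point — the ``main obstacle'', though it is minor — is the handling of the infimum in the definition of $\r(\SMT_N)$: one must be careful that $\SMT_N$ may not exist, so the comparison has to be carried out at the level of the defining infimum~(\ref{eq:SMT}) rather than on an actual minimizing tree, and one must check that multiplying a family of reals by bounded factors behaves correctly under $\inf$. Since all weights are nonnegative and the bounding constants are uniform, this is routine. Everything else is the observation that edge weights, spanning-tree weights, and their infima all transform by the uniform factor sandwich $[c_1,c_2]$.
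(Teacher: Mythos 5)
Your proof is correct: the uniform factor sandwich $[c_1,c_2]$ passes from edge weights to spanning-tree weights, then through the infimum defining $\r(\SMT_N)$ in~(\ref{eq:SMT}), and finally through the infimum over finite $N$, and your symmetry reduction to a single inequality (replacing $c_1,c_2$ by $1/c_2,1/c_1$) is handled properly. The paper itself does not reprove this lemma --- it cites \cite{Cieslik1} and merely remarks that the argument for normed spaces carries over verbatim to general metric spaces --- and your argument is precisely that standard one.
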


 \begin {lem}\label{lem:subspace}
Let $(X,\r)$ be a metric space, and $Y\ss X$ be some its subspace. Then
 $$
m(Y,\r)\ge m(X,\r).
 $$
 \end {lem}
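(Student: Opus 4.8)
The plan is to exploit two monotonicity facts: that the weight of a minimal spanning tree on a finite set $N$ depends only on the restriction of the metric to $N$, and that the length $\r(\SMT_N)$ can only decrease when we allow the Steiner points to lie in a larger ambient space. Combined with the observation that the infimum defining $m(X,\r)$ runs over a larger family of finite sets than the one defining $m(Y,\r)$, this gives the claim at once.

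First I would fix an arbitrary finite subset $N\ss Y$ having at least two distinct points; finite sets $N$ with $\r(\MST_N)=0$ contribute nothing to either infimum and may be ignored. Since $\MST_N$ is a minimal spanning tree in the complete graph on $N$ whose edge $xy$ carries weight $\r(x,y)$, the number $\r(\MST_N)$ is the same whether we view $N$ inside $Y$ or inside $X$. Turning to the Steiner length, by~(\ref{eq:SMT}) the quantity $\r(\SMT_N)$ is an infimum of $\r(\MST_{\bN})$ over finite $\bN$ with $N\ss\bN$; in the space $Y$ one ranges over $\bN\ss Y$, while in $X$ one ranges over $\bN\ss X\sp Y$. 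The second family includes the first, and the weights $\r(\MST_{\bN})$ are intrinsic to $\bN$, so the infimum taken in $X$ is no larger than the one taken in $Y$. Writing a superscript for the ambient space, this means
 $$
\frac{\r(\SMT_N)^{X}}{\r(\MST_N)}\le\frac{\r(\SMT_N)^{Y}}{\r(\MST_N)}
 $$
for every finite $N\ss Y$ of the kind considered.

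Finally I would pass to infima. Because $\{N:N\ss Y\}\ss\{N:N\ss X\}$, the quantity $m(X,\r)=\inf_{\{N:N\ss X\}}\r(\SMT_N)^{X}/\r(\MST_N)$ is at most $\inf_{\{N:N\ss Y\}}\r(\SMT_N)^{X}/\r(\MST_N)$, and by the displayed inequality the latter is at most $\inf_{\{N:N\ss Y\}}\r(\SMT_N)^{Y}/\r(\MST_N)=m(Y,\r)$. Chaining these yields $m(X,\r)\le m(Y,\r)$.

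I do not expect any real obstacle here: there is no delicate estimate, and the argument is purely a bookkeeping of infima. The one point that needs care is precisely this bookkeeping — since $\r(\SMT_N)$ is defined in~(\ref{eq:SMT}) as an infimum and an actual $\SMT_N$ need not exist, every comparison must be made between infima rather than between concrete optimal networks, and the degenerate finite sets $N$ must be set aside at the start so that the ratio $\r(\SMT_N)/\r(\MST_N)$ is well defined.
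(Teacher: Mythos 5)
Your proof is correct. The paper itself does not reproduce a proof of this lemma (it defers to the book \cite{Cieslik1}), but your argument --- that $\r(\MST_N)$ is intrinsic to the finite set $N$, that the infimum in~(\ref{eq:SMT}) defining $\r(\SMT_N)$ only decreases when the Steiner points are allowed to range over the larger space $X$, and that $m(X,\r)$ is an infimum over a larger family of finite sets than $m(Y,\r)$ --- is exactly the standard argument this fact rests on, and your care in comparing infima rather than nonexistent optimal trees, and in discarding the degenerate sets with $\r(\MST_N)=0$, is appropriate.
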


The following Proposition holds.

 \begin {prop}\label{prop:bundle}
Let $f\: X\to Y$ be some mapping of a metric space $(X,\r_X)$ onto a
metric space $(Y,\r_Y)$. We assume that $f$ does not increase the
distances, that is, for arbitrary points $x$ and  $y$ from $X$ the
following inequality holds:
 $$
\r_Y\bigl(f(x),f(y)\bigr)\le\r_X(x,y).
 $$
Then for arbitrary finite set $N\ss Y$ we have:
 $$
\r_X\bigl(\MST_N\bigr)\ge\r_Y\bigl(\MST_{f(N)}\bigr),\ \
\r_X\bigl(\SMT_N\bigr)\ge\r_Y\bigl(\SMT_{f(N)}\bigr).
 $$
 \end {prop}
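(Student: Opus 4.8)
The plan is to isolate the combinatorial content of the statement in the spanning-tree inequality, and then to deduce the Steiner-tree inequality from it using nothing but the definition of $\r(\SMT)$ as an infimum. (Here and below we read the statement with $N$ a finite subset of $X$, so that $f(N)$ and $\r_X(\MST_N)$ make sense.)

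First I would prove $\r_X(\MST_N)\ge\r_Y(\MST_{f(N)})$. Let $T$ be a minimal spanning tree on $N$ in $(X,\r_X)$, i.e.\ a connected spanning subgraph of least $\r_X$-weight of the complete graph on $N$. Push $T$ forward by $f$: replace each edge $xy\in E(T)$ by the pair $\{f(x),f(y)\}$, discarding the loops that arise when $f(x)=f(y)$ — they carry weight $\r_Y\bigl(f(x),f(x)\bigr)=0$ in any case. One obtains a subgraph $H$ of the complete graph on $f(N)$. Since $T$ is connected with vertex set $N$, the graph $H$ is connected with vertex set $f(N)$, so it is a connected spanning subgraph of the complete graph on $f(N)$. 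Because $f$ does not increase distances, each surviving edge satisfies $\r_Y\bigl(f(x),f(y)\bigr)\le\r_X(x,y)$, hence $\r_Y(H)\le\r_X(T)$. As $\r_Y(\MST_{f(N)})$ is the least weight of a connected spanning subgraph of the complete graph on $f(N)$, we conclude $\r_Y(\MST_{f(N)})\le\r_Y(H)\le\r_X(T)=\r_X(\MST_N)$.

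Next I would derive $\r_X(\SMT_N)\ge\r_Y(\SMT_{f(N)})$ from the spanning-tree case. Fix $\e>0$. By the definition of $\r_X(\SMT_N)$ as the greatest lower bound of $\r_X(\MST_{\bN})$ over all finite $\bN$ with $N\ss\bN\ss X$, choose such a $\bN$ with $\r_X(\MST_{\bN})<\r_X(\SMT_N)+\e$. Then $f(\bN)$ is a finite subset of $Y$ containing $f(N)$, so the definition of $\r_Y(\SMT_{f(N)})$ gives $\r_Y(\SMT_{f(N)})\le\r_Y(\MST_{f(\bN)})$. Applying the spanning-tree inequality already proved, now with $\bN$ in place of $N$, yields $\r_Y(\MST_{f(\bN)})\le\r_X(\MST_{\bN})<\r_X(\SMT_N)+\e$. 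Hence $\r_Y(\SMT_{f(N)})<\r_X(\SMT_N)+\e$ for every $\e>0$, and letting $\e\to0$ completes the proof.

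I do not expect a serious obstacle; the points that need care are bookkeeping ones. The image of a tree under a non-injective $f$ is a general multigraph possibly with loops, so the first step must be phrased in terms of ``connected spanning subgraph of the complete graph'' rather than ``spanning tree''. And since an $\SMT$ need not exist, the Steiner-tree inequality cannot be obtained by transporting an actual tree through $f$; it has to go through the $\e$-approximation of the infimum as above. Surjectivity of $f$ is not really used — it merely guarantees $Y=f(X)$.
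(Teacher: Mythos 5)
Your proof is correct and follows essentially the same route as the paper: transport a (near-)optimal structure from $X$ to $f(N)\subset Y$ using that $f$ does not increase distances, then deduce the $\SMT$ inequality purely from the definition of $\r(\SMT)$ as an infimum over finite supersets. The only cosmetic difference is that in the first step the paper compares infima over all connected graphs on $N$ and on $f(N)$ (even establishing a two-way correspondence), whereas you push forward a single minimal spanning tree; and in the second step the paper writes the same argument as a chain of infima rather than an explicit $\e$-approximation.
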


 \begin {proof}
Let $G$ be an arbitrary connected graph constructed on $N$. We consider
two weight functions on $G$ defined on the edges $xy$ of $G$ as follows:
$\r_X(xy)=\r_X(x,y)$, and $\om_Y(xy)=\r_Y\bigl(f(x),f(y)\bigr)$.  Since
$f$ does not increase the distances, then $\r_X(G)\ge\om_Y(G)$.

Let $G'$ be a graph on $N'=f(N)$, such that the number of edges joining
the vertices $x'$ and $y'$ from $N'=V(G')$ is equal to the number of edges
from $G$ joining the vertices from $f^{-1}(x')\cap N$ with the vertices
from $f^{-1}(y')\cap N$. It is clear that $G'$ is connected, and
$\r_Y(G')=\om_Y(G)$.

Conversely, it is easy to see that for an arbitrary connected graph $G'$
constructed on $f(N)$ there exists a connected graph $G_X$ on $N$, such
that $\r_Y(G')=\om_Y(G_X)$. (To construct $G_X$ it suffices to span each
set $N\cap f^{-1}(x')$, $x'\in N'$, by a connected graph, and then to join
each pair of the constructed graphs corresponding to some adjacent
vertices $G'$ by $k$ edges, where $k$ is the multiplicity of the
corresponding edge in $G'$).  Therefore,
 \begin {multline*}
\r_X(\MST_N)=\inf_{\{G:V(G)=N\}}\r_X(G)\ge\inf_{\{G:V(G)=N\}}\om_Y(G)=\\
\inf_{\{G':V(G')=f(N)\}}\r_Y(G')=
\r_Y\bigl(\MST_{f(N)}\bigr).
 \end {multline*}
Thereby, the first inequality is proved.

Now let us prove the second inequality. We have:
 \begin {multline*}
\r_X(\SMT_N)=\inf_{\{\bN:\bN\sp N\}}\r_X(\MST_\bN)\ge
             \inf_{\{\bN:\bN\sp N\}}\r_Y(\MST_{f(\bN)})\ge\\
\inf_{\{\bN':\bN'\sp f(N)\}}\r_Y(\MST_{\bN'})=\r_Y(\SMT_{f(N)}).
 \end {multline*}
The proof is complete.
 \end {proof}

 \begin {prop}\label{prop:project}
Let $f\: X\to Y$ be a mapping of a metric space $(X,\r_X)$ to a metric
space $(Y,\r_Y)$, and let $f$ do not increase the distances. We assume
that for each finite subset $N'\ss Y$ there exists a finite subset $N\ss
X$, such that $f(N)=N'$ and
 \begin {equation}\label{eq:bundle}
\r_X(\SMT_N)\le\r_Y(\SMT_{N'}).
 \end {equation}
Then
 $$
m(X,\r_X)\le m(Y,\r_Y).
 $$
 \end {prop}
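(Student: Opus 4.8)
The plan is to combine Proposition~\ref{prop:bundle} with the extra hypothesis~(\ref{eq:bundle}) to control both the numerator and the denominator of the Steiner ratio of $(Y,\r_Y)$ by quantities living over $X$. Fix a finite subset $N'\ss Y$ and choose, as the hypothesis permits, a finite subset $N\ss X$ with $f(N)=N'$ and $\r_X(\SMT_N)\le\r_Y(\SMT_{N'})$. Since $f$ does not increase distances, Proposition~\ref{prop:bundle} gives $\r_X(\MST_N)\ge\r_Y(\MST_{f(N)})=\r_Y(\MST_{N'})$. Putting these two inequalities together,
 $$
\frac{\r_X(\SMT_N)}{\r_X(\MST_N)}\le\frac{\r_Y(\SMT_{N'})}{\r_Y(\MST_{N'})}.
 $$

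Now take the infimum. On the left-hand side, $\frac{\r_X(\SMT_N)}{\r_X(\MST_N)}\ge m(X,\r_X)$ by the very definition of the Steiner ratio, so $m(X,\r_X)\le\frac{\r_Y(\SMT_{N'})}{\r_Y(\MST_{N'})}$ for every finite $N'\ss Y$. Taking the infimum over all such $N'$ yields $m(X,\r_X)\le m(Y,\r_Y)$, which is exactly the claim.

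One routine point to check is that the denominators are positive so that the division is legitimate: if $N'$ is a single point then $\r_Y(\MST_{N'})=0$, but such $N'$ contribute the trivial ratio and can be excluded (or handled by the convention that $0/0=1$), so the infimum defining $m(Y,\r_Y)$ is unaffected. The only genuine subtlety — and the step I would treat most carefully — is the passage from $\r_X(\SMT_N)$ to the fraction: one must remember that $\r(\SMT_N)$ denotes the infimum in~(\ref{eq:SMT}) whether or not an actual minimal Steiner tree exists, and verify that the chain of inequalities above is valid at the level of these infima. This is immediate from the monotonicity already established in Proposition~\ref{prop:bundle}, so no real obstacle arises; the proposition is essentially a bookkeeping consequence of the preceding one together with the added hypothesis~(\ref{eq:bundle}).
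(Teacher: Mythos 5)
Your proof is correct and uses exactly the same two ingredients as the paper's — hypothesis~(\ref{eq:bundle}) to bound the numerator and the first inequality of Proposition~\ref{prop:bundle} to bound the denominator; the only cosmetic difference is that you fix $N'$ and bound $m(X,\r_X)$ by the ratio of a single witness $N$, whereas the paper writes the same estimate as a double infimum $\inf_{N'}\inf_{\{N:f(N)=N'\}}$. Your aside about degenerate (single-point) boundaries is a reasonable bit of care that the paper silently omits.
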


 \begin {proof}
Let $N\ss X$ be an arbitrary finite set. We have
 \begin {multline*}
m(X,\r_X)=\inf_{\{N:N\ss X\}}\frac{\r_X(\SMT_N)}{\r_X(\MST_N)}=\\
\inf_{\{N':N'\ss Y\}}\inf_{\{N:f(N)=N'\}}
                  \frac{\r_X(\SMT_N)}{\r_X(\MST_N)}\le\\
\inf_{\{N':N'\ss Y\}}\frac{\r_Y(\SMT_{N'})}{\r_Y(\MST_{N'})}=m(Y,\r_Y),
 \end {multline*}
where the inequality follows from both condition~(\ref{eq:bundle})
and the first inequality of Proposition~\ref{prop:bundle}.
The proof is complete.
 \end {proof}

Proposition~\ref{prop:project} can be slightly reinforced as follows.

 \begin {prop}\label{prop:project1}
Let $f\: X\to Y$ be a mapping of a metric space $(X,\r_X)$ to a metric
space $(Y,\r_Y)$, and let $f$ do not increase the distances. We assume
that for each finite subset $N'\ss Y$ the following inequality holds:
 \begin {equation}\label{eq:bundle-new}
\inf_{\{N:f(N)=N'\}}\r_X(\SMT_N)\le\r_Y(\SMT_{N'}).
 \end {equation}
Then
 $$
m(X,\r_X)\le m(Y,\r_Y).
 $$
 \end {prop}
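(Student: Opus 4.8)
The plan is to mirror the proof of Proposition~\ref{prop:project} almost verbatim, taking advantage of the fact that the only place the hypothesis~(\ref{eq:bundle}) was used there was inside an infimum over the fibre $\{N:f(N)=N'\}$. So the weaker hypothesis~(\ref{eq:bundle-new}), which only controls the infimum of $\r_X(\SMT_N)$ over that fibre rather than asking for a single good $N$, should suffice.

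Concretely, I would start exactly as before: for the chain of equalities
\begin{multline*}
m(X,\r_X)=\inf_{\{N:N\ss X\}}\frac{\r_X(\SMT_N)}{\r_X(\MST_N)}=\\
\inf_{\{N':N'\ss Y\}}\ \inf_{\{N:f(N)=N'\}}\frac{\r_X(\SMT_N)}{\r_X(\MST_N)},
\end{multline*}
which just partitions all finite $N\ss X$ according to their image $N'=f(N)$. The point where care is needed is bounding the inner infimum: in Proposition~\ref{prop:project} one had a single $N$ with $\r_X(\SMT_N)\le\r_Y(\SMT_{N'})$ and $\r_X(\MST_N)\ge\r_Y(\MST_{N'})$ (the latter from Proposition~\ref{prop:bundle}), giving the ratio bound for that $N$ and hence for the infimum. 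Now, instead, I want to observe that for \emph{every} $N$ with $f(N)=N'$ one has $\r_X(\MST_N)\ge\r_Y(\MST_{N'})$ by Proposition~\ref{prop:bundle}, and therefore
$$
\inf_{\{N:f(N)=N'\}}\frac{\r_X(\SMT_N)}{\r_X(\MST_N)}\le\frac{1}{\r_Y(\MST_{N'})}\inf_{\{N:f(N)=N'\}}\r_X(\SMT_N)\le\frac{\r_Y(\SMT_{N'})}{\r_Y(\MST_{N'})},
$$
where the last step is exactly~(\ref{eq:bundle-new}). Taking $\inf$ over $N'\ss Y$ gives $m(X,\r_X)\le m(Y,\r_Y)$.

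The only genuinely delicate point — and the step I would treat most carefully — is the passage $\frac{\r_X(\SMT_N)}{\r_X(\MST_N)}\le\frac{\r_X(\SMT_N)}{\r_Y(\MST_{N'})}$ followed by pulling the fibrewise infimum inside the numerator. This needs $\r_Y(\MST_{N'})>0$, i.e. $N'$ should not be a single point (the ratio is only of interest for $|N'|\ge2$, and for $|N'|=1$ both trees are trivial), and it needs that $\inf_{\{N:f(N)=N'\}}\big(\r_X(\SMT_N)/\r_Y(\MST_{N'})\big)=\big(\inf_{\{N:f(N)=N'\}}\r_X(\SMT_N)\big)/\r_Y(\MST_{N'})$, which is immediate since $\r_Y(\MST_{N'})$ is a fixed positive constant independent of $N$. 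So in fact there is no real obstacle; the proposition is a routine strengthening. I would write it up as follows.

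\begin{proof}
Let $N'\ss Y$ be an arbitrary finite set with at least two points. By the first inequality of Proposition~\ref{prop:bundle}, for every finite $N\ss X$ with $f(N)=N'$ we have $\r_X(\MST_N)\ge\r_Y(\MST_{N'})>0$, and therefore
$$
\frac{\r_X(\SMT_N)}{\r_X(\MST_N)}\le\frac{\r_X(\SMT_N)}{\r_Y(\MST_{N'})}.
$$
Taking the infimum over all $N$ with $f(N)=N'$ and using that $\r_Y(\MST_{N'})$ does not depend on $N$, together with hypothesis~(\ref{eq:bundle-new}), we obtain
$$
\inf_{\{N:f(N)=N'\}}\frac{\r_X(\SMT_N)}{\r_X(\MST_N)}\le\frac{1}{\r_Y(\MST_{N'})}\inf_{\{N:f(N)=N'\}}\r_X(\SMT_N)\le\frac{\r_Y(\SMT_{N'})}{\r_Y(\MST_{N'})}.
$$
Hence
\begin{multline*}
m(X,\r_X)=\inf_{\{N:N\ss X\}}\frac{\r_X(\SMT_N)}{\r_X(\MST_N)}=
\inf_{\{N':N'\ss Y\}}\ \inf_{\{N:f(N)=N'\}}\frac{\r_X(\SMT_N)}{\r_X(\MST_N)}\le\\
\inf_{\{N':N'\ss Y\}}\frac{\r_Y(\SMT_{N'})}{\r_Y(\MST_{N'})}=m(Y,\r_Y),
\end{multline*}
where in the second equality we have partitioned the finite subsets $N\ss X$ according to their images $N'=f(N)\ss Y$. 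The proof is complete.
\end{proof}
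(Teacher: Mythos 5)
Your proof is correct and follows essentially the same route as the paper: both rest on $\r_X(\MST_N)\ge\r_Y(\MST_{f(N)})$ from Proposition~\ref{prop:bundle} together with the fibrewise infimum hypothesis~(\ref{eq:bundle-new}). The only difference is cosmetic --- the paper extracts a minimizing sequence $N_i$ with $\r_X(\SMT_{N_i})\le\r_Y(\SMT_{N'})+\e_i$ and lets $\e_i\to0$, whereas you pull the constant denominator $\r_Y(\MST_{N'})$ out of the infimum directly, which is a slightly cleaner way of saying the same thing.
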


 \begin {proof}
Let $N\ss X$ be an arbitrary finite set. As in the proof of
Proposition~\ref{prop:project}, we have:
 $$
m(X,\r_X)=\inf_{\{N:N\ss X\}}\frac{\r_X(\SMT_N)}{\r_X(\MST_N)}=
\inf_{\{N':N'\ss Y\}}\inf_{\{N:f(N)=N'\}}
                  \frac{\r_X(\SMT_N)}{\r_X(\MST_N)}.
 $$
Since $f$ does not increase distances, then
$\r_X(\MST_N)\ge\r_Y(\MST_{f(N)})$ (see Proposition~\ref{prop:bundle}); on
the other hand, due to our assumption, there exists a sequence of finite
sets $N_i\ss X$, $f(N_i)=N'$, such that
$\r_X(\SMT_{N_i})\le\r_Y(\SMT_{N'})+\e_i$, where the sequence of positive
numbers $\e_i$ tends to $0$ as $i\to\infty$, and the sequence of positive
numbers $\r_X(\SMT_{N_i})$ tends to $\inf_{\{N:f(N)=N'\}}\r_X(\SMT_N)$.
Therefore,
 $$
\frac{\r_X(\SMT_{N_i})}{\r_X(\MST_{N_i})}\le
\frac{\r_Y(\SMT_{N'})+\e_i}{\r_Y(\MST_{N'})},
 $$
and, taking in account that $\{N_i\}\subset\{N:f(N)=N'\}$, we get:
 \begin {multline*}
\inf_{\{N':N'\ss Y\}}\inf_{\{N:f(N)=N'\}}\frac{\r_X(\SMT_N)}{\r_X(\MST_N)}
\le\\
\inf_{\{N':N'\ss Y\}}
    \inf_{\{N_i\}}\frac{\r_X(\SMT_{N_i})}{\r_X(\MST_{N_i})}
\le
\inf_{\{N':N'\ss Y\}}\inf_i\frac{\r_Y(\SMT_{N'})+\e_i}{\r_Y(\MST_{N'})}=\\
\inf_{\{N':N'\ss Y\}}\frac{\r_Y(\SMT_{N'})}{\r_Y(\MST_{N'})}=
m(Y,\r_Y).
 \end {multline*}
The proof is complete.
 \end {proof}

Let $M$ be an arbitrary connected $n$-dimensional Riemannian manifold. For
each piecewise-smooth curve $\g$ by $\len(\g)$ we denote the length of
$\g$ with respect to the Riemannian metric. By $\r$ we denote the
intrinsic metric generated by the Riemannian metric. We recall that
 $$
\r(x,y)=\inf_\g\len(\g),
 $$
where the greatest lower bound is taken over all piecewise-smooth curves
$\g$ joining the points $x$ and $y$.

Let $P$ be a point from $M$. We consider the normal coordinates
$(x^1,\ldots,x^n)$ centered at $P$, such that the Riemannian metric
$g_{ij}(x)$ calculated at $P$ coincides with $\dl_{ij}$. Let $U(\dl)$ be
the open convex ball centered at $P$ and having the radius $\dl$. Any two
points $x$ and $y$ from the ball are joined by a unique geodesic $\g$
lying in $U(\dl)$.  At that time, $\r(x,y)=\len(\g)$.  Thus, the ball
$U(\dl)$ is a metric space with intrinsic metric, that is, the distance
between the points equals to the greatest lower bound of the curves`
lengths over all the measurable curves joining the points. Notice that in
terms of the coordinates $(x^i)$ the ball $U(\dl)$ is defined as follows:
 $$
U(\dl)=\bigl\{(x^1)^2+\cdots+(x^n)^2<\dl^2\bigr\}.
 $$
Therefore, if we define the Euclidean distance $\r_e$ in $U(\dl)$ (in
terms of the normal coordinates $(x^i)$), then the metric space
$\bigl(U(\dl),\r_e\bigr)$ also is the space with intrinsic metric
generated by the Euclidean metric $\dl_{ij}$.

Since the Riemannian metric $g_{ij}(x)$ depends on $x\in U(\e)$ smoothly,
then for any $\e$, $1/n^2>\e>0$, there exists a $\dl>0$, such that
 \begin {equation}\label{eq:small}
|g_{ij}(x)-\dl_{ij}|<\e
 \end {equation}
for all points $x\in U(\dl)$. The latter implies the following
Proposition.

 \begin {prop}
Let $\|v\|_g$ be the length of the tangent vector $v\in T_xM$ with respect
to the Riemannian metric $g_{ij}$, and let $\|v\|_e$ be the length of $v$
with respect to the Euclidean metric $\dl_{ij}$.  If for any $i$ and $j$
the inequality~(\ref{eq:small}) holds, then
 $$
\sqrt{1-n^2\e}\,\|v\|_e\le\|v\|_g\le\sqrt{1+n^2\e}\,\|v\|_e.
 $$
 \end {prop}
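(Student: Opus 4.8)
The plan is to reduce the statement to a pointwise algebraic inequality about quadratic forms. Write $v=(v^1,\dots,v^n)$ in the normal coordinates, so that $\|v\|_e^2=\sum_i (v^i)^2$ and $\|v\|_g^2=\sum_{i,j} g_{ij}(x)v^iv^j$. Then
\[
\|v\|_g^2-\|v\|_e^2=\sum_{i,j}\bigl(g_{ij}(x)-\dl_{ij}\bigr)v^iv^j,
\]
and I would bound the right-hand side in absolute value. Using the hypothesis $|g_{ij}(x)-\dl_{ij}|<\e$ for all $i,j$ together with $|v^iv^j|\le\frac12\bigl((v^i)^2+(v^j)^2\bigr)$ (or simply the Cauchy--Schwarz-type estimate $\sum_{i,j}|v^i||v^j|=\bigl(\sum_i|v^i|\bigr)^2\le n\sum_i(v^i)^2$), one gets
\[
\bigl|\,\|v\|_g^2-\|v\|_e^2\,\bigr|\le \e\,n\,\|v\|_e^2 .
\]
This is already close; the factor $n$ rather than $n^2$ only makes the bound stronger, so it certainly suffices. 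Hence $(1-n\e)\|v\|_e^2\le\|v\|_g^2\le(1+n\e)\|v\|_e^2$, and a fortiori
\[
(1-n^2\e)\,\|v\|_e^2\le\|v\|_g^2\le(1+n^2\e)\,\|v\|_e^2 .
\]

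The next step is to take square roots. Since $\e<1/n^2$ by assumption, we have $1-n^2\e>0$, so $\sqrt{1-n^2\e}$ is a well-defined positive real; thus taking square roots of the displayed chain of inequalities yields exactly
\[
\sqrt{1-n^2\e}\,\|v\|_e\le\|v\|_g\le\sqrt{1+n^2\e}\,\|v\|_e,
\]
which is the claim. One small point to be careful about: the quantity $\|v\|_g$ is a priori only known to be nonnegative (it is a norm), and the estimate shows in particular $\|v\|_g^2\ge(1-n^2\e)\|v\|_e^2>0$ for $v\neq0$, so the lower bound is nontrivial and confirms that $g_{ij}(x)$ stays positive-definite on $U(\dl)$.

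There is really no serious obstacle here; the proposition is a routine perturbation estimate, and its only role is to feed Lemma~\ref{lem:equiv_metr} with the constants $c_1=\sqrt{1-n^2\e}$ and $c_2=\sqrt{1+n^2\e}$ comparing the Riemannian and Euclidean length functionals on $U(\dl)$. The one place that deserves a word of justification is the passage from a bound on the integrands $\|\dot\g\|_g$ versus $\|\dot\g\|_e$ to the corresponding bound on the intrinsic metrics $\r$ versus $\r_e$ — but that is immediate by integrating along curves and taking infima, and in any case the proposition as stated is purely about tangent vectors, so even that remark can be deferred to where the proposition is applied.
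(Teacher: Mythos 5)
Your proof is correct, but it takes a genuinely different route from the paper's. You estimate the quadratic form directly: $\|v\|_g^2-\|v\|_e^2=\sum_{i,j}(g_{ij}-\dl_{ij})v^iv^j$, bounded in absolute value by $\e\bigl(\sum_i|v^i|\bigr)^2\le\e n\|v\|_e^2$ via Cauchy--Schwarz. The paper instead diagonalizes $(g_{ij})$ by an orthogonal transformation, bounds each eigenvalue by $|\l_k-1|\le\sum_{i,j}|c^i_k||c^j_k||g_{ij}-\dl_{ij}|\le n^2\e$ using only $|c^i_j|\le1$, and then compares $\|v\|_g^2=\sum_k\l_k(v^k)^2$ with $\max_k\l_k\cdot\|v\|_e^2$. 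Your argument is more elementary (no spectral decomposition needed) and in fact yields the sharper constants $1\pm n\e$ in place of $1\pm n^2\e$, from which the stated inequality follows a fortiori, exactly as you note; the paper's eigenvalue route buys nothing extra here beyond making the positive-definiteness of $g$ on $U(\dl)$ visible eigenvalue by eigenvalue, which your remark at the end recovers anyway. Both proofs correctly use $\e<1/n^2$ to take square roots, and both feed Lemma~\ref{lem:equiv_metr} with $c_1=\sqrt{1-n^2\e}$, $c_2=\sqrt{1+n^2\e}$ after integrating along curves.
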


 \begin {proof}
Consider an orthogonal transformation (with respect to the Euclidean
metric $\dl_{ij}$) reducing the matrix $(g_{ij})$ to the diagonal form
$\diag(\l_1,\ldots,\l_n)$, and let $(c^i_j)$ be the matrix of this
transformation. Then $\l_k=\sum_{i,j}c^i_kc^j_kg_{ij}$, therefore, using
that $|c^i_j|\le1$ due to orthogonality of $(c^i_j)$, we get:
 \begin {multline*}
|\l_k-1|=\Bigl|\sum_{i,j}(c^i_kc^j_kg_{ij}-c^i_kc^j_k\dl_{ij})\Bigr|\le\\
\sum_{i,j}|c^i_k|\cdot|c^j_k|\cdot|g_{ij}-\dl_{ij}|\le
\sum_{i,j}|g_{ij}-\dl_{ij}|\le n^2\e.
 \end {multline*}
So we have:
 $$
\|v\|_g=\sqrt{\sum_k\l_kv^kv^k}\le\sqrt{\max_k\l_k\,\sum_kv^kv^k}
\le\sqrt{1+n^2\e}\|v\|_e.
 $$
Similarly, we get
 $$
\|v\|_g\ge\sqrt{1-n^2\e}\|v\|_e.
 $$
The proof is complete.
 \end {proof}

Using the definition of the distance between a pair of points of a
connected Riemannian manifold, we obtain the following result.

 \begin {cor}\label{cor:riem}
Let $M$ be an arbitrary connected $n$-dimensional Riemannian manifold, and
let $U(\dl)$, $\r$, and $\r_e$ be as above. Then for an arbitrary $\e$,
$1/n^2>\e>0$,  there exists a $\dl>0$, such that
 $$
\sqrt{1-n^2\e}\,\r_e(x,y)\le\r(x,y)\le\sqrt{1+n^2\e}\,\r_e(x,y)
 $$
for all points $x,y\in U(\dl)$.
 \end {cor}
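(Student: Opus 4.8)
The plan is to deduce the statement from the pointwise norm comparison of the preceding Proposition by integrating along curves. First, fix $\e$ with $1/n^2>\e>0$. By the inequality~(\ref{eq:small}) recalled just above, there is a $\dl>0$ such that $|g_{ij}(x)-\dl_{ij}|<\e$ for every $x\in U(\dl)$; shrinking $\dl$ if necessary, I would also arrange that $U(\dl)$ is a convex normal ball, that is, any two of its points are joined by a unique minimizing Riemannian geodesic lying inside $U(\dl)$ and realizing the $\r$-distance between them. This last property is exactly the one already ascribed to $U(\dl)$ in the construction above, so no genuinely new work is needed here.

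The key step is a length comparison. Let $\g\:[a,b]\to U(\dl)$ be an arbitrary piecewise-smooth curve. Applying the Proposition to the velocity vector $\dot\g(t)\in T_{\g(t)}M$ at each parameter $t$, we get
$$\sqrt{1-n^2\e}\,\|\dot\g(t)\|_e\le\|\dot\g(t)\|_g\le\sqrt{1+n^2\e}\,\|\dot\g(t)\|_e,$$
and integrating over $[a,b]$ yields
$$\sqrt{1-n^2\e}\,\len_e(\g)\le\len(\g)\le\sqrt{1+n^2\e}\,\len_e(\g),$$
where $\len_e$ denotes the Euclidean length computed in the normal coordinates $(x^i)$.

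Now fix $x,y\in U(\dl)$. For the right-hand inequality of the Corollary, apply the length comparison to the Euclidean straight segment joining $x$ to $y$: it lies in $U(\dl)$ since that set is a Euclidean ball and hence convex, its Euclidean length equals $\r_e(x,y)$, so its Riemannian length is at most $\sqrt{1+n^2\e}\,\r_e(x,y)$; as $\r(x,y)$ is the infimum of Riemannian lengths over all curves in $M$ joining $x$ and $y$, this gives $\r(x,y)\le\sqrt{1+n^2\e}\,\r_e(x,y)$. For the left-hand inequality, let $\g$ be the minimizing Riemannian geodesic from $x$ to $y$; by the property of $U(\dl)$ fixed above, $\g\ss U(\dl)$ and $\r(x,y)=\len(\g)$, whence $\r(x,y)=\len(\g)\ge\sqrt{1-n^2\e}\,\len_e(\g)\ge\sqrt{1-n^2\e}\,\r_e(x,y)$, the last step because $\len_e(\g)$ is the Euclidean length of a curve joining $x$ and $y$ and $\r_e$ is the infimum of such lengths.

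I do not expect a real obstacle here: the one point needing care is that the two curves used in the comparison — the straight segment and the Riemannian geodesic — genuinely stay inside $U(\dl)$, so that the pointwise estimate of the Proposition may be integrated along them; this is guaranteed by convexity of the coordinate ball in the first case and by the defining property of the convex normal ball in the second. Everything else is routine integration, and the constants match those in the Proposition.
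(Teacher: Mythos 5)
Your proof is correct and follows exactly the route the paper intends: the paper gives no explicit argument, saying only that the corollary follows from the Proposition ``using the definition of the distance between a pair of points,'' and your integration of the pointwise norm comparison along the straight segment (for the upper bound) and the minimizing geodesic contained in the convex ball (for the lower bound) is precisely the routine verification being omitted. The care you take that both comparison curves stay inside $U(\dl)$ is the only point of substance, and you handle it correctly using the Euclidean convexity of the coordinate ball and the geodesic convexity already built into the paper's choice of $U(\dl)$.
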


Since the Steiner ratio is evidently the same for any convex open subsets
of $\R^n$,  Corollary~\ref{cor:riem} and Lemma~\ref{lem:equiv_metr} lead
to the following result.

 \begin {cor}\label{cor:steiner_for_manifolds}
Let $M$ be an arbitrary $n$-dimensional Riemannian manifold, let
$U(\e)\ss M$ be an open convex ball of a small radius $\e$, and let $P$ be
the center of $U(\e)$. By $\r$ we denote the metric on $M$ generated by
the Riemannian metric. Then
 $$
\sqrt{\frac{1-n^2\e}{1+n^2\e}}m(\R^n)\le m\bigl(U(\e),\r\bigr)\le
\sqrt{\frac{1+n^2\e}{1-n^2\e}}m(\R^n),
 $$
where $m(\R^n)$ stands for the Steiner ratio of the Euclidean space
$\R^n$.
 \end {cor}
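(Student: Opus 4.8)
The plan is to combine the two ingredients that immediately precede the statement: Corollary~\ref{cor:riem}, which says that on a sufficiently small convex ball $U(\dl)$ the intrinsic Riemannian metric $\r$ and the Euclidean metric $\r_e$ (expressed in normal coordinates) are two-sidedly comparable with constants $\sqrt{1-n^2\e}$ and $\sqrt{1+n^2\e}$; and Lemma~\ref{lem:equiv_metr}, which turns such a comparison of metrics into a comparison of Steiner ratios. Since $U(\e)$ in the statement plays the role of the ball of radius $\dl$ (I would simply note that one may take $\dl=\e$ after possibly shrinking, or reconcile the notation by saying: given $\e$ with $1/n^2>\e>0$, Corollary~\ref{cor:riem} supplies the required $\dl$, and we name the ball $U(\e)$ for brevity), the hypotheses of Lemma~\ref{lem:equiv_metr} are met on the set $X=U(\e)$ with $\r_1=\r$, $\r_2=\r_e$, $c_1=\sqrt{1-n^2\e}$, $c_2=\sqrt{1+n^2\e}$.

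Concretely, the steps are as follows. First, fix $\e$ with $1/n^2>\e>0$ and invoke Corollary~\ref{cor:riem} to obtain a radius on which $\sqrt{1-n^2\e}\,\r_e(x,y)\le\r(x,y)\le\sqrt{1+n^2\e}\,\r_e(x,y)$ for all $x,y$ in the ball; call this ball $U(\e)$. Second, apply Lemma~\ref{lem:equiv_metr} with the above identification of constants, which yields
 $$
\frac{\sqrt{1-n^2\e}}{\sqrt{1+n^2\e}}\,m\bigl(U(\e),\r_e\bigr)\le m\bigl(U(\e),\r\bigr)\le\frac{\sqrt{1+n^2\e}}{\sqrt{1-n^2\e}}\,m\bigl(U(\e),\r_e\bigr).
 $$
Third, observe that $\bigl(U(\e),\r_e\bigr)$, being a convex open ball in $\R^n$ with the Euclidean intrinsic metric (which on a convex set coincides with the restricted Euclidean metric), has $m\bigl(U(\e),\r_e\bigr)=m(\R^n)$; this is the remark "the Steiner ratio is evidently the same for any convex open subsets of $\R^n$" already asserted just before the statement, and I would justify it briefly either by a scaling/translation argument together with Lemma~\ref{lem:subspace} applied in both directions, or simply cite it as the stated elementary fact. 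Substituting $m(\R^n)$ for $m\bigl(U(\e),\r_e\bigr)$ and rewriting $\sqrt{1\mp n^2\e}/\sqrt{1\pm n^2\e}=\sqrt{(1\mp n^2\e)/(1\pm n^2\e)}$ gives exactly the claimed two-sided bound.

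I do not expect a serious obstacle here — the corollary is essentially a packaging of the two cited lemmas. The one point that needs a line of care is the identification of the Euclidean intrinsic metric on a convex ball with the flat metric of $\R^n$, so that $m\bigl(U(\e),\r_e\bigr)=m(\R^n)$ rather than merely $\ge m(\R^n)$; convexity is what makes geodesics straight segments and hence makes this an equality, and it is also what guarantees (via Corollary~\ref{cor:riem}) that the Riemannian distance between points of the ball is realized by a geodesic staying inside the ball, so that the metric comparison is genuinely valid on $U(\e)$ itself and not just infinitesimally. Once that is in place, the rest is the purely formal substitution above, and the proof is complete.
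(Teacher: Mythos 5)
Your proposal is correct and follows exactly the route the paper intends: Corollary~\ref{cor:riem} supplies the two-sided metric comparison on the convex ball, Lemma~\ref{lem:equiv_metr} converts it into the two-sided bound on Steiner ratios, and the identification $m\bigl(U(\e),\r_e\bigr)=m(\R^n)$ for a convex Euclidean ball finishes the argument. The paper gives no more detail than the one-sentence remark preceding the statement, so your write-up (including the careful note about why convexity makes the Euclidean Steiner ratio of the ball an equality rather than just an inequality) is, if anything, more complete than the original.
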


Now let us prove the main theorems stated in Introduction.

 \begin {Proof}{Proof of Theorem~\ref{th:riem}}
Let $M$ be an arbitrary connected $n$-dimensional Riemannian manifold, and
let $\r$ be the metric generated by the Riemannian metric of $M$. Let
$P\in M$ be an arbitrary point from $M$, and let $U(\e)$ be an open convex
ball centered at $P$ and having radius $\e<1/n^2$. As above, let $(x^i)$
be normal coordinates on $U(\e)$, and let  $\r_e$ be the metric on $U(\e)$
generated by the Euclidean metric $\dl_{ij}$ (with respect to $(x^i)$).

For some decreasing sequence $\{\e_i\}$ of positive numbers with $\e_i<\e$
for any $i$, where $\e_i\to0$ as $i\to\infty$, we consider a family of
nested subsets $X_i=U(\e_i)$. Notice that due to convexity of Euclidean
balls $\bigl(U(\e),\r_e\bigr)$ we have:
 $$
m\bigl(U(\e),\r_e\bigr)=m(\R^n).
 $$
Besides, due to convexity of the balls $U(\e)$ with respect to the
intrinsic metric $\r'$ generated by the Riemannian metric $g_{ij}$, this
intrinsic metric $\r'$ coincides with the restriction of the metric $\r$.
Thus, the ball $U(\e)$ with the intrinsic metric $\r'$ is a subspace in
$(M,\r)$.

Corollary~\ref{cor:steiner_for_manifolds} implies that
 $$
m(X_i,\r)\le\sqrt{\frac{1+n^2\e}{1-n^2\e}}m(\R^n).
 $$
Since $\sqrt{\frac{1+n^2\e}{1-n^2\e}}\to1$ as $i\to\infty$ due to the
choice of $\{\e_i\}$, we get
 $$
\inf_im(X_i,\r)\le m(\R^n).
 $$
But, due to Lemma~\ref{lem:subspace} we have:
 $$
m(M,\r)\le\inf_im(X_i,\r).
 $$
The proof is complete.
 \end {Proof}

 \begin {Proof}{Proof of Theorem~\ref{th:bundle}}
Let $\pi\:W\to M$ be a locally isometric covering, where $W$ and $M$ are
connected Riemannian manifolds.  By $\r_W$ and $\r_M$ we denote the
metrics generated by the Riemannian metrics on $W$ and $M$, respectively.
Notice that a locally isometric covering does not increase distances,
since the image of a measurable curve $\g$ has the same length as $\g$
has.

We consider an arbitrary finite set $N'\ss M$. Let $G'_i$ be a family of
trees on finite sets $\bN'_i\sp N'$ such that
 $$
\r_M(G'_i)\to\r_M(\SMT_{N'})\ \ \text{as $i\to\infty$}.
 $$
For each $G'_i$ by $\G'_i$ we denote an embedded network of the type
$G'_i$ on $M$ such that the vertex set of $\G'_i$ is $V(G'_i)$ and the
length of $\G'_i$ differs from $\r_M(G'_i)$ at most by $1/i$.  Let $\G_i$
be a connected component of $\pi^{-1}(\G'_i)$, and
$N_i=\pi^{-1}(N)\cap\G_i$. Since the network $\G'_i$ is contractible, then
the restriction of the fibration $\pi$ onto $\G'_i$ is trivial. Therefore
the restriction of the projection $\pi$ onto $\G_i$ is a homeomorphism.
Since the projection $\pi$ is locally isometric, then the length of the
network $\G_i$ in $W$ coincides with the length of the network $\G'_i$ in
$M$.  But $\r_W(\SMT_{N_i})$ does not exceed the length of $\G_i$,
therefore
 $$
\r_W(\SMT_{N_i})\le\r_M(\SMT_{N'})+\e_i,
 $$
where the sequence $\{\e_i\}$ of positive numbers tends to $0$ as
$i\to\infty$. So,
 $$
\inf_{\{N:f(N)=N'\}}\r_W(\SMT_N)\le\r_M(\SMT_{N'}).
 $$
It remains to apply Proposition~\ref{prop:project1}. The proof is
complete.
 \end {Proof}

 \begin {Proof}{Proof of Corollary~\ref{cor:2-dim}}
It follows from Theorems~\ref{th:riem}, and~\ref{th:bundle}; Du and Hwang
theorem~\cite{DuHwang} saying that the Steiner ratio of the Euclidean
plane equals $\sqrt3/2$; and also from Rubinstein and Weng
theorem~\cite{Rubin} saying that the Steiner ratio of the standard two
dimensional sphere with constant positive curvature metric equals
$\sqrt3/2$.
 \end {Proof}

 \begin {Proof}{Proof of Theorem~\ref{th:hyper}}
Let us consider the Poincar\'e model of the Lobachevsky plane $L^2(-1)$
with constant curvature $-1$. We recall that this model is a radius $1$
flat disk centered at the origin of the Euclidean plane with Cartesian
coordinates $(x,y)$, and the metric $ds^2$ in the disk is defined as
follows:
 $$
ds^2=4\frac{dx^2+dy^2}{(1-x^2-y^2)^2}.
 $$

It is well known that for each regular triangle in the Lobachevsky plane
the circumscribed circle exists. The radii emitted out of the center
of the circle to the vertices of the triangle forms the angles of $120^\c$.

Let $r$ be the radius of the circumscribed circle. The cosine rule implies
that the length $a$ of the side of the regular triangle can be calculated
as follows:
 $$
\ch a=\ch^2r-\sh^2r\cos\frac{2\pi}3=1+\frac32\sh^2r.
 $$
It is easy to verify that for such triangle the length of $\MST$ equals
$2a$, and the length of $\SMT$ equals $3r$. Therefore, the Steiner ratio
$m(r)$ for the regular triangle inscribed into the circle of radius $r$ in
the Lobachevsky plane $L^2(-1)$ has the form
 $$
m(r)=\frac32\cdot\frac{r}{\arcch\bigl(1+\frac32\sh^2(r)\bigr)}.
 $$
It is easy to calculate that limit of the function $m(r)$ as $r\to\infty$
is equal to $3/4$. The proof is complete.
 \end {Proof}

 \begin {Proof}{Proof of Theorem~\ref{th:hypersurf}}
It is easy to see that the Taylor series for the function $m(r)$ at $r=0$
has the following form:
 $$
\frac{\sqrt3}2-\frac{r^2}{16\sqrt3}+O(r^4).
 $$
Therefore, $m(r)$ is strictly less than $\sqrt3/2$ in some interval
$(0,\e)$. The latter means that for sufficiently small regular triangles
on the surfaces of constant curvature $-1$, the relation of the lengths of
$\SMT$ and $\MST$ is strictly less than $\sqrt3/2$. The proof is complete.
 \end {Proof}

\bibliographystyle{plain}

\begin{thebibliography}{99}

\bibitem{Cieslik1} {\em D.~Cieslik}, Steiner minimal trees. ---
Dordrecht, Boston, London, Kluwer Academic Publishers, 1998.

\bibitem{DuHwang} {\em D.~Z.~Du and F.~K.~Hwang}, A proof of
Gilbert--Pollak Conjecture on the Steiner ratio. --- Algorithmica, v.~7
(1992) pp.~121--135.

\bibitem{Gilbert:68}
{\em E.N. Gilbert and H.O. Pollak},
{S}teiner {M}inimal {T}rees. SIAM J. Appl. Math., 16:1--29, 1968.

\bibitem{HwRiW} {\em F.~K.~Hwang, D.~Richards, and P.~Winter}, The Steiner
Tree Problem. --- Elsevier Science Publishers, 1992.

\bibitem{IT6} {\em A.~O.~Ivanov and A.~A.~Tuzhilin}, Minimal Networks. The
Steiner Problem and Its Generalizations. --- N.W., Boca Raton, Florida, CRC
Press, 1994.

\bibitem{ITbookWP} {\em A.~O.~Ivanov and A.~A.~Tuzhilin}, Branching
Solutions of One-Dimensional Variational Problems. --- World Publisher
Press, 2000, to appear.

\bibitem{Rubin} {\em J.~H.~Rubinstein and J.~F.~Weng}, Compression
theorems and Steiner ratios on spheres. --- J. Combin. Optimization, v.~1
(1997) pp.~67--78.

\end{thebibliography}

 \end {document}